\RequirePackage{fix-cm}
\documentclass[smallextended,envcountsect,envcountsame]{svjour3}

\usepackage{amsmath,amssymb,mathtools}
\usepackage{microtype}
\usepackage[hidelinks]{hyperref}

\smartqed
\renewenvironment{proof}[1][Proof]
  {\par\addvspace{\medskipamount}\noindent\textit{#1.}\enspace\normalfont}
  {\qed\par\addvspace{\medskipamount}}
\renewcommand{\makeheadbox}{}
\renewcommand{\subclassname}{\textbf{Mathematics Subject Classification (2020)}\enspace}
\journalname{Journal of Optimization Theory and Applications}
\date{}

\newcommand{\R}{\mathrm{I\!R}}
\newcommand{\trans}{\mathsf{T}}
\newcommand{\norm}[1]{\lVert#1\rVert_2}

\title{Convergence of Cyclic Steepest Descent for Quadratics with Two Distinct Eigenvalues}
\titlerunning{Cyclic Steepest Descent with Two Eigenvalues}

\author{Yu Li \and Qihang Wang}
\institute{Yu Li \at
Key Laboratory of Quantum Theory and Applications of MoE, Lanzhou Center for
Theoretical Physics, and Key Laboratory of Theoretical Physics of Gansu Province,
Lanzhou University, Lanzhou, 730000, Gansu, China\\
\email{bosonization@163.com}
\and Qihang Wang, Corresponding author \at
School of Mathematical Sciences, Peking University, Beijing 100871, China\\
\email{2501110049@stu.pku.edu.cn}}

\hypersetup{
  pdftitle={Convergence of Cyclic Steepest Descent for Quadratics with Two Distinct Eigenvalues},
  pdfauthor={Yu Li and Qihang Wang},
  pdfsubject={Exact convergence classification for cyclic steepest descent},
  pdfkeywords={cyclic steepest descent, exact line search, convex quadratic optimization, R-superlinear convergence, exceptional set}
}

\begin{document}

\maketitle
\thispagestyle{plain}

\begin{abstract}
The cyclic steepest descent method computes the exact steepest descent
stepsize at the start of each cycle and reuses it for a fixed number $m\ge2$
of iterations.  We study this method for strictly convex quadratics whose
Hessian has exactly two distinct eigenvalues, of arbitrary multiplicities,
and determine its behavior from every initial point.  At the minimizer no
update is needed.  For nonoptimal initializations, a gradient in one
eigenspace gives one-step termination.  If both eigenspace components are
nonzero and have equal norm, the stepsize never changes and the error contracts
by the same constant factor at every iteration.  In the remaining case, both
components are nonzero and unequal in norm, and the method converges doubly
exponentially and hence $R$-superlinearly.  Thus $R$-superlinear convergence fails exactly on an
explicit cone of initial points of measure zero.  The proof reduces the method
to an exact scalar recurrence for the ratio of the two eigenspace components
at the start of each cycle.  We also determine the leading term of the
logarithm of the error at every iteration within a cycle, which identifies the
exact exponent of the doubly exponential decay and shows that this convergence
is not $Q$-superlinear.
\end{abstract}

\keywords{cyclic steepest descent \and exact line search \and
convex quadratic optimization \and R-superlinear convergence \and exceptional set}
\subclass{Primary 90C20 \and Secondary 65K05, 90C25}

\section{Introduction}

For the strictly convex quadratic on $\R^n$
\begin{equation}\label{eq:intro-q}
  q(x)=\frac12 x^{\trans}Ax-c^{\trans}x,\qquad A=A^{\trans}\succ0,
\end{equation}
cyclic steepest descent (CSD) computes an exact steepest-descent stepsize once
per cycle and reuses it for a fixed number $m$ of updates.  We ask how its
convergence depends on the initial gradient when $A$ has two distinct
eigenvalues.  In this case a single spectral ratio determines whether the
orbit terminates, converges linearly, or converges doubly exponentially.

CSD belongs to the delayed-gradient family of Friedlander, Mart\'inez,
Molina, and Raydan~\cite{friedlander1999}.  Dai and Fletcher analyzed its
asymptotic behavior~\cite{daifletcher2005}, and Dai's ICM 2022 survey describes
$R$-superlinear convergence at $m\ge(n+1)/2$ as likely~\cite{daiicm2022};
for integer $m$ this condition is equivalent to $n<2m$, the superlinear side
of the threshold in the simplified model of Dai and Fletcher.
The ICM Conjectures directory records a universal formulation of this
question~\cite{icmconjectures}.  Exact two- and three-dimensional exceptional
orbits disprove that universal formulation~\cite{liwang2026}.  The question
here is different: which initializations are exceptional, and what happens to
every other orbit in the two-spectrum class?  The two papers thus give
complementary answers: exact exceptions invalidate the universal guarantee,
while the classification here determines every exceptional initialization
and the behavior of every other orbit for two-spectrum Hessians.

In two dimensions, Shen, Li, and Dai~\cite[Theorem~1]{shen2025v1} already
derive the logarithmic ratio recurrence and obtain $R$-superlinear
convergence of the cycle-boundary gradient subsequence; their argument
parameterizes the initial gradient by the logarithm of its component ratio
and uses that this logarithm is nonzero.  Their revised preprint
studies the distinct triangle steepest-descent method~\cite{shen2025v2}.
For $m=2$, the alternating steepest-descent method of Dai, Hager,
Schittkowski, and Zhang is precisely CSD~\cite[Section~3.3]{cbb2006}.
A recent preprint of Gu~\cite[Theorem~2.1]{gu2026} presents an
almost-everywhere doubly exponential convergence theorem for Hessians with
$d$ distinct eigenvalues whenever $2m>d$, including repeated eigenvalues.
Its two-spectrum case therefore contains the almost-everywhere convergence
consequence considered here.  Our purpose is the exhaustive classification:
we extend the scalar argument to arbitrary eigenspace multiplicities and to
the full iteration sequence, identify the excluded initializations, at which
this logarithm vanishes, as the exact exceptional cone and determine its linear rate, and give a
direct description of each phase of every successful mixed orbit.  The
phase analysis also shows that the critical per-cycle exponent
$\log(2m-1)$ of Gu's theorem is attained in the two-spectrum case.

Theorem~\ref{thm:classification}, together with its proof in
Section~\ref{sec:classification} and the doubly exponential
estimate~\eqref{eq:superexp}, was first made public on 1~September~2026 in a
Zenodo deposit~\cite{liwangzenodo2026}, three weeks before Gu's preprint
appeared on 22~September~2026; it was obtained independently of that work.
The two results are complementary.  Gu's theorem covers any number of
distinct eigenvalues and excludes a null set of initial points that is not
described explicitly, whereas Theorem~\ref{thm:classification} is restricted
to two distinct eigenvalues and describes every initialization, including the
exceptional cone and its linear rate.  Proposition~\ref{prop:sharp} and
Remark~\ref{rem:gu} are new in the present version and were written after
Gu's preprint appeared.

The proof of Theorem~\ref{thm:classification} uses spectral-block invariance to obtain an exact scalar power map, then separates
its fixed ratio from its expanding logarithmic ratios.  Section~\ref{sec:method}
gives the block reduction; Section~\ref{sec:classification} proves the
classification; and Section~\ref{sec:rates} determines the precise
logarithmic decay and the distinction between $R$- and $Q$-superlinear
convergence.  The explicit counterexample calculations belong to the
companion paper and are not repeated here.

Ordinary exact-line-search steepest descent instead recomputes its stepsize
at every update.  Its probability map and polarization behavior were studied
by Akaike and Yu~\cite{akaike1959,yu2025}.  Reusing a single step changes the
two-block logarithmic ratio multiplier from $-1$ to $1-2m$, which is the
source of the faster decay here.  Alternating and delayed-gradient methods
provide further context~\cite{daiyuan2003,zoumagoules2022}.

\section{Cyclic Steepest Descent and Spectral Blocks}
\label{sec:method}

Let $x_*=A^{-1}c$ and set $e_k=x_k-x_*$ and $g_k=Ae_k$.
All norms are Euclidean.
A null sequence $\{e_k\}$ converges $Q$-superlinearly if
$\norm{e_{k+1}}/\norm{e_k}\to0$, and $R$-superlinearly if
$\norm{e_k}^{1/k}\to0$, that is, if its root-convergence factor
$\limsup_k\norm{e_k}^{1/k}$ is zero~\cite[Chapter~9]{ortega2000};
equivalently, $\norm{e_k}$ is bounded by a $Q$-superlinearly convergent null
sequence, which is the form used in~\cite{shen2025v1}.  It converges
$R$-linearly if this root factor lies in $(0,1)$.
The iteration stops when $g_k=0$.  The exact
steepest-descent stepsize at a cycle boundary is
\begin{equation}\label{eq:alpha}
  \alpha_{m\ell}^{\mathrm{SD}}
  :=\frac{g_{m\ell}^{\trans}g_{m\ell}}
  {g_{m\ell}^{\trans}Ag_{m\ell}}.
\end{equation}
The CSD updates are
\begin{equation}\label{eq:csd}
  x_{m\ell+i+1}=x_{m\ell+i}
  -\alpha_{m\ell}^{\mathrm{SD}}g_{m\ell+i},
  \qquad i=0,\ldots,m-1.
\end{equation}
Equivalently,
\begin{equation}\label{eq:gupdate}
  g_{k+1}=(I-\alpha_k A)g_k,
\end{equation}
where $\alpha_k=\alpha_{m\ell}^{\mathrm{SD}}$ for
$m\ell\le k<m(\ell+1)$.

Assume now that $A$ has exactly two distinct eigenvalues $0<a<b$.  Let $P$
and $Q=I-P$ be the orthogonal projectors onto the corresponding eigenspaces,
so that
\begin{equation}\label{eq:two-spectrum}
  A=aP+bQ.
\end{equation}
At a cycle boundary write
$u=P g_{m\ell}$ and $v=Q g_{m\ell}$.  When $u$ and $v$ are both nonzero,
define
\begin{equation}\label{eq:rdef}
  r_\ell:=\frac{\norm{u}}{\norm{v}},\qquad
  \mu_\ell:=\frac{a\norm{u}^2+b\norm{v}^2}
  {\norm{u}^2+\norm{v}^2}.
\end{equation}
Then $\alpha_{m\ell}^{\mathrm{SD}}=1/\mu_\ell$ and
$a<\mu_\ell<b$.

\begin{lemma}[Block update]
\label{lem:block}
For a two-spectrum Hessian and a cycle boundary with $u,v\ne0$,
\begin{equation}\label{eq:block-update}
  P g_{m(\ell+1)}=\left(\frac{\mu_\ell-a}{\mu_\ell}\right)^m u,
  \qquad
  Q g_{m(\ell+1)}=(-1)^m
  \left(\frac{b-\mu_\ell}{\mu_\ell}\right)^m v.
\end{equation}
\end{lemma}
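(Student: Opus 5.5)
The plan is a direct computation using the gradient recursion \eqref{eq:gupdate} together with the fact that $P$ and $Q$ commute with $A$. First I confirm the stepsize claim stated just before the lemma. Since $P$ and $Q$ are complementary orthogonal projectors, $g_{m\ell}=u+v$ with $u\perp v$. Hence
\[
g_{m\ell}^{\trans}g_{m\ell}=\norm{u}^2+\norm{v}^2,
\qquad
g_{m\ell}^{\trans}Ag_{m\ell}=a\norm{u}^2+b\norm{v}^2 ,
\]
where the second identity uses $Au=au$ and $Av=bv$, which follow from \eqref{eq:two-spectrum}. By \eqref{eq:alpha} this gives $\alpha_{m\ell}^{\mathrm{SD}}=1/\mu_\ell$. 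Because $u,v\ne0$, $\mu_\ell$ is a strict convex combination of $a$ and $b$, so $a<\mu_\ell<b$.

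Next I unroll the cycle. For $m\ell\le k<m(\ell+1)$ the stepsize is constant, $\alpha_k=1/\mu_\ell$. Applying \eqref{eq:gupdate} $m$ times therefore gives
\[
g_{m(\ell+1)}=(I-A/\mu_\ell)^m g_{m\ell}.
\]
Since $P$ is a polynomial in $A$ (equivalently $PA=AP=aP$), it commutes with $(I-A/\mu_\ell)^m$. So
\[
Pg_{m(\ell+1)}=(I-A/\mu_\ell)^m u=(1-a/\mu_\ell)^m u=\left(\frac{\mu_\ell-a}{\mu_\ell}\right)^m u .
\]
In the same way, $Qg_{m(\ell+1)}=(1-b/\mu_\ell)^m v$, and rewriting $1-b/\mu_\ell=-(b-\mu_\ell)/\mu_\ell$ produces the factor $(-1)^m$.

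The only point needing care is whether the iteration might stop in the middle of the cycle, since the method halts when $g_k=0$. This cannot happen. Because $a<\mu_\ell<b$, both scalar factors $1-a/\mu_\ell$ and $1-b/\mu_\ell$ are nonzero. So the $P$- and $Q$-components of every $g_{m\ell+i}$ stay nonzero, and all $m$ updates are actually performed. I expect no real obstacle beyond this bookkeeping.
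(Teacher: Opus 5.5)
Your proof is correct and follows the paper's argument: the stepsize $1/\mu_\ell$ is fixed throughout the cycle, the one-step multipliers on the $a$- and $b$-eigenspaces are $1-a/\mu_\ell$ and $1-b/\mu_\ell=-(b-\mu_\ell)/\mu_\ell$, and taking $m$th powers gives the claim. Your verification of $\alpha_{m\ell}^{\mathrm{SD}}=1/\mu_\ell$ and your check that no block vanishes mid-cycle are extra bookkeeping; the paper states the first just before the lemma and the second at the start of the classification proof.
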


\begin{proof}[Proof of Lemma~\ref{lem:block}]
During the cycle the same stepsize $1/\mu_\ell$ is used.  On the $a$- and
$b$-eigenspaces the one-step multipliers are respectively
$1-a/\mu_\ell=(\mu_\ell-a)/\mu_\ell$ and
$1-b/\mu_\ell=-(b-\mu_\ell)/\mu_\ell$.  Taking the $m$th powers gives
\eqref{eq:block-update}.  The proof is complete.
\end{proof}

\section{Exact Ratio Dynamics and Classification}
\label{sec:classification}

\begin{theorem}[Two-spectrum classification]
\label{thm:classification}
Let $A=aP+bQ$ with $0<a<b$ and $P,Q$ orthogonal projectors of nonzero rank,
$P+Q=I$, and $PQ=0$.
For CSD with an integer cycle length $m\ge2$, consider any initial error
$e_0$ and let $g_0=Ae_0$.  If $g_0=0$, then $x_0=x_*$ and no step is taken.
Otherwise exactly one of the following holds.

\begin{enumerate}
\item If $Pg_0=0$ or $Qg_0=0$, the method reaches $x_*$ after one update.
\item If both blocks are nonzero and
$\norm{Pg_0}=\norm{Qg_0}$, then every update uses
$\alpha=2/(a+b)$ and
\begin{equation}\label{eq:equal-orbit}
  \norm{e_{k+1}}=\rho\norm{e_k},
  \qquad \rho:=\frac{b-a}{b+a}\in(0,1).
\end{equation}
Thus the orbit is nonterminating and $R$-linear but not $R$-superlinear.
\item If both blocks are nonzero and
$\norm{Pg_0}\ne\norm{Qg_0}$, then the complete CSD sequence converges
$R$-superlinearly to $x_*$.
\end{enumerate}
In particular, the nonterminating non-$R$-superlinear initial gradients are
exactly the cone
\[
  \mathcal E:=\{g\in\R^n:\norm{Pg}=\norm{Qg}>0\}.
\]
This cone has Lebesgue measure zero.
Since $A$ is invertible, the corresponding exceptional set of initial errors
and initial points is Lebesgue-null as well.
\end{theorem}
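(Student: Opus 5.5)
The plan is to reduce everything to the scalar ratio $r_\ell$ via Lemma~\ref{lem:block}. Case~1 is direct. If, say, $Qg_0=0$, then $g_0=Pg_0$ is an $a$-eigenvector, so $\alpha_0^{\mathrm{SD}}=1/a$ and $g_1=(I-A/a)g_0=0$. The case $Pg_0=0$ gives $\alpha_0^{\mathrm{SD}}=1/b$ symmetrically. Once $g_1=0$ the iteration stops.

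When both blocks are nonzero, Lemma~\ref{lem:block} keeps both blocks nonzero, since $a<\mu_\ell<b$. I will first rewrite $\mu_\ell$ in terms of $r=r_\ell$ as $\mu=(ar^2+b)/(r^2+1)$. This gives $\mu-a=(b-a)/(r^2+1)$ and $b-\mu=(b-a)r^2/(r^2+1)$, so the lemma yields the exact power map
\[
  r_{\ell+1}=r_\ell\left(\frac{\mu_\ell-a}{b-\mu_\ell}\right)^m=r_\ell^{\,1-2m}.
\]
Setting $s_\ell=\log r_\ell$, this becomes $s_{\ell+1}=(1-2m)s_\ell$, so $s_\ell=(1-2m)^\ell s_0$. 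The dichotomy is therefore $s_0=0$ versus $s_0\neq0$.

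For Case~2, $s_0=0$ gives $r_\ell=1$ for all $\ell$, hence $\mu_\ell=(a+b)/2$ and every step uses $\alpha=2/(a+b)$. Within each step, the $P$-block of $e_k=A^{-1}g_k$ is multiplied by $(b-a)/(b+a)$ and the $Q$-block by $-(b-a)/(b+a)$. Since $Pe_k\perp Qe_k$, Pythagoras gives $\norm{e_{k+1}}=\rho\norm{e_k}$ exactly. Then $\norm{e_k}^{1/k}\to\rho>0$, so the orbit is $R$-linear and not $R$-superlinear. It never terminates because $\rho>0$.

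Case~3 is the main obstacle, because the convergence must hold for the whole sequence, not only at cycle boundaries. I will bound the per-step multipliers within cycle $\ell$. Both $|1-a/\mu_\ell|$ and $|1-b/\mu_\ell|$ are at most $\max\{(\mu_\ell-a)/a,(b-\mu_\ell)/a\}$, which equals $(b-a)\max\{1,r_\ell^2\}/\bigl(a(1+r_\ell^2)\bigr)\le(b-a)/a$, a uniform constant $C$. At cycle boundaries the blocks satisfy
\[
  \norm{Pg_{m(\ell+1)}}=\left(\frac{1}{1+r_\ell^2}\right)^m\Bigl(\frac{b-a}{\mu_\ell}\Bigr)^m\norm{u},
\]
and the $Q$-block picks up the factor $\bigl(r_\ell^2/(1+r_\ell^2)\bigr)^m\bigl((b-a)/\mu_\ell\bigr)^m$. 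Since $|s_\ell|=(2m-1)^\ell|s_0|\to\infty$, one of $r_\ell^2/(1+r_\ell^2)$ and $1/(1+r_\ell^2)$ is at most $e^{-2|s_\ell|}$ and the other is at most $1$.

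I will therefore track the product of the two block norms, or equivalently $\log\norm{g}$ of the dominant block. The small block in cycle $\ell$ is dominant in the next cycle, because the sign of $s_\ell$ alternates. Each cycle then multiplies $\norm{g_{m\ell}}$ by at most $C'\,e^{-2m|s_{\ell-1}|}$ times bounded factors. This should give $\log\norm{g_{m\ell}}\le -c(2m-1)^{\ell}+O(\ell)$ with $c>0$, which is doubly exponential decay.

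If the alternation bookkeeping gets messy, I will fall back on a cruder route: show $\norm{g_{m(\ell+1)}}\le K\,\varepsilon_\ell\norm{g_{m\ell}}$ with $\varepsilon_\ell\to0$, which already forces $\norm{g_{m\ell}}^{1/(m\ell)}\to0$. Intermediate iterates satisfy $\norm{g_{m\ell+i}}\le C^{i}\norm{g_{m\ell}}$, and $\norm{e_k}\le\norm{g_k}/a$, so $\norm{e_k}^{1/k}\to0$ along the full sequence.

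Finally, $\mathcal E$ is contained in the zero set of the nonzero quadratic polynomial $\norm{Pg}^2-\norm{Qg}^2$. It is nonzero because both projectors have positive rank, so $\mathcal E$ is Lebesgue-null. The maps $e_0\mapsto Ae_0$ and $x_0\mapsto e_0$ are affine bijections, so the corresponding sets of initial errors and initial points are null as well.
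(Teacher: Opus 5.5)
Your proposal follows the paper's proof. It uses the same power map $r_{\ell+1}=r_\ell^{1-2m}$, the same split on $r_0=1$, a per-cycle contraction at cycle boundaries together with bounded intra-cycle multipliers, and the same quadratic-form argument for the null set. Cases~1 and~2 and the measure-zero part are fine.

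One step of your primary route for Case~3 is false as written. By \eqref{eq:theta}, the cycle starting at $m\ell$ multiplies the gradient norm by a factor of exact order $e^{-|s_\ell|}=e^{-(2m-1)|s_{\ell-1}|}$. For instance, $G_{\ell+1}/G_\ell\sim\bigl((b-a)/a\bigr)^m r_\ell^{-1}$ for large $r_\ell$. This factor is not $O(e^{-2m|s_{\ell-1}|})$, since the quotient of the two is $e^{|s_{\ell-1}|}\to\infty$. Summing your bound would give $-\log\norm{g_{m\ell}}\ge c(2m-1)^\ell-O(\ell)$ with $c$ strictly larger than the true limit $|s_0|/(2m-2)$ from \eqref{eq:sharp-phase}. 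Your final form $-c(2m-1)^\ell+O(\ell)$ is still true, but this derivation of it does not hold.

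The exponent $2m$ arises in two other places:
\begin{itemize}
\item per cycle, for the \emph{product} of the two block norms, which picks up a factor at most $\bigl((b-a)/a\bigr)^{2m}e^{-2m|s_\ell|}$; tracking this product is not equivalent to tracking the dominant block;
\item over two consecutive cycles, for the dominant block: $G_{\ell+1}\le C''e^{-2m|s_{\ell-1}|}G_{\ell-1}$.
\end{itemize}

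Your fallback route is exactly the paper's \eqref{eq:theta-bound}, and it does close the argument. You should, however, write out the line that produces $\varepsilon_\ell$. The factor at most $e^{-2m|s_\ell|}$ falls on the dominant block of $g_{m\ell}$. The other block has factor at most $1$ and norm $e^{-|s_\ell|}$ times the dominant one. Together with $(b-a)/\mu_\ell\le(b-a)/a$ this gives
$\norm{g_{m(\ell+1)}}\le 2\bigl((b-a)/a\bigr)^m e^{-|s_\ell|}\norm{g_{m\ell}}$, that is, $\varepsilon_\ell=e^{-|s_\ell|}$.

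With that line, the rest of your Case~3 argument is correct: the deduction $\norm{g_{m\ell}}^{1/(m\ell)}\to0$, the factor $C^i$ within a cycle, and $\norm{e_k}\le\norm{g_k}/a$.
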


\begin{proof}[Proof of Theorem~\ref{thm:classification}]
The first assertion follows because an initial gradient in one eigenspace is
annihilated by the exact step $1/a$ or $1/b$.

Assume henceforth that both blocks are nonzero.  Since $a<\mu_\ell<b$,
neither block multiplier vanishes, so both blocks remain nonzero at every
update.  From
\eqref{eq:rdef},
\begin{equation}\label{eq:mu-components}
  \mu_\ell-a=\frac{b-a}{r_\ell^2+1},
  \qquad
  b-\mu_\ell=\frac{(b-a)r_\ell^2}{r_\ell^2+1}.
\end{equation}
Combining Lemma~\ref{lem:block} with \eqref{eq:mu-components} yields the
exact cycle-boundary recurrence
\begin{equation}\label{eq:ratio-recurrence}
  r_{\ell+1}=r_\ell^{1-2m}.
\end{equation}

If $r_0=1$, then $\mu_\ell=(a+b)/2$ for every $\ell$, and each one-step
multiplier has modulus $\rho=(b-a)/(b+a)$.  The two blocks retain equal norm,
and $e_k=A^{-1}g_k$ is multiplied blockwise by the same factors.  Hence
\eqref{eq:equal-orbit} holds, proving the second assertion.

Suppose $r_0\ne1$ and put $\nu=2m-1\ge3$ and
$s_\ell=|\log r_\ell|$.  Equation~\eqref{eq:ratio-recurrence} gives
$s_\ell=\nu^\ell s_0$.  Let $G_\ell:=\norm{g_{m\ell}}$.  A direct use of
\eqref{eq:block-update} gives
\begin{equation}\label{eq:theta}
 \frac{G_{\ell+1}^2}{G_\ell^2}
 =\frac{(b-a)^{2m}(r_\ell^2+r_\ell^{4m})}
 {(a r_\ell^2+b)^{2m}(r_\ell^2+1)}.
\end{equation}
There is a constant $C=C(a,b,m)$ such that
\begin{equation}\label{eq:theta-bound}
  \frac{G_{\ell+1}}{G_\ell}
  \le C\min\{r_\ell,r_\ell^{-1}\}=C e^{-s_\ell}.
\end{equation}
Indeed, for $r\le1$ use $r^2+r^{4m}\le2r^2$ and
$a r^2+b\ge b$; for $r\ge1$ use
$r^2+r^{4m}\le2r^{4m}$ and $a r^2+b\ge a r^2$.
Iterating \eqref{eq:theta-bound},
\begin{equation}\label{eq:superexp}
  G_\ell\le G_0 C^\ell
  \exp\!\left(-s_0\frac{\nu^\ell-1}{\nu-1}\right).
\end{equation}
Thus $G_\ell^{1/\ell}\to0$.  Between cycle boundaries there are only $m-1$
updates and all their multipliers are bounded by a constant depending on
$a,b$; consequently the full gradient sequence has root factor zero as well.
Since $a\norm{e_k}\le\norm{g_k}\le b\norm{e_k}$, the same is true for the
error sequence.  This proves $R$-superlinear convergence in the third case.

Finally, $\mathcal E$ is contained in the zero set of the quadratic form
$g\mapsto\norm{Pg}^2-\norm{Qg}^2=g^{\trans}(P-Q)g$, which is not identically zero
because $P$ and $Q$ have nonzero rank; the zero set of a nonzero polynomial
has Lebesgue measure zero.  The proof is complete.
\end{proof}

\begin{remark}
The recurrence also explains why the cycle length matters.  For $m=1$ the
exponent in \eqref{eq:ratio-recurrence} is $-1$, so the ratio merely
alternates between $r_0$ and $r_0^{-1}$; the two-update multiplier of each
block is then a nonzero constant, and ordinary exact-line-search steepest
descent converges $R$-linearly but not $R$-superlinearly from every mixed
initialization.  The superlinear conclusion above uses $m\ge2$.
\end{remark}

\section{Sharp Fixed-Cycle Asymptotics}
\label{sec:rates}

The estimate in \eqref{eq:superexp} proves convergence but does not specify
the leading error at intermediate updates.  Retaining the bounded radial
terms gives this information without changing the projective reduction.

\begin{proposition}[Phase-dependent logarithmic decay]
\label{prop:sharp}
Under the hypotheses of Theorem~\ref{thm:classification}, suppose both
initial spectral blocks are nonzero and $r_0\ne1$.  Put
$\nu=2m-1$ and $s=|\log r_0|>0$.  For each $i=0,\ldots,m-1$,
\begin{equation}\label{eq:sharp-phase}
 \lim_{\ell\to\infty}
 \frac{-\log\norm{e_{m\ell+i}}}{\nu^\ell}
 =
 \begin{cases}
 s/(\nu-1),&i=0,\\
 \nu s/(\nu-1),&1\le i<m.
 \end{cases}
\end{equation}
In particular,
\begin{equation}\label{eq:sharp-exponent}
 \lim_{k\to\infty}
 \frac{\log(-\log\norm{e_k})}{k}
 =\frac{\log(2m-1)}{m}.
\end{equation}
Nevertheless, the full error sequence is not $Q$-superlinearly convergent.
\end{proposition}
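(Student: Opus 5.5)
Since $\norm{e_k}$ and $\norm{g_k}$ differ by at most the factors $a$ and $b$, which vanish after dividing by $\nu^\ell$, it suffices to prove \eqref{eq:sharp-phase} for $\norm{g_{m\ell+i}}$. I would first track the two blocks exactly. By Lemma~\ref{lem:block} and \eqref{eq:mu-components}, during cycle $\ell$ the $P$-block is multiplied at each update by $(\mu_\ell-a)/\mu_\ell=(b-a)/(ar_\ell^2+b)$. The $Q$-block is multiplied by $-(b-\mu_\ell)/\mu_\ell=-(b-a)r_\ell^2/(ar_\ell^2+b)$. Since $r_{\ell+1}=r_\ell^{1-2m}$ with $s_\ell=\nu^\ell s$, the sign of $\log r_\ell$ alternates. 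Consider an even $\ell$ with $r_0<1$, so that $r_\ell=e^{-s_\ell}$ is tiny. Then the $P$-multiplier is $\Theta(1)$ and the $Q$-multiplier is $\Theta(r_\ell^2)$. The other parity gives multipliers $\Theta(r_\ell^{-2})$ and $\Theta(1)$, with the roles of the blocks swapped. In every case, after $i$ steps one block has been multiplied by $\Theta(1)^i$ and the other by $\Theta(e^{-2is_\ell})$. The implied constants depend only on $a,b,m$.

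Next I would compute the cycle-boundary norm sharply. The identity \eqref{eq:theta} gives $\log G_{\ell+1}-\log G_\ell=-s_\ell+O(1)$ uniformly. This follows from the same two-sided elementary bounds as in the proof of \eqref{eq:theta-bound}: for $r\le1$ the quantity $r^2+r^{4m}$ lies in $[r^2,2r^2]$ and $ar^2+b$ lies in $[b,a+b]$, and the case $r\ge1$ is symmetric. Summing over cycles gives $-\log G_\ell=s(\nu^\ell-1)/(\nu-1)+O(\ell)$, and dividing by $\nu^\ell$ yields $s/(\nu-1)$ for $i=0$. At the boundary the larger block dominates; it is the one with ratio $1/r_\ell$ relative to the other, and the smaller block is $G_\ell\cdot\Theta(e^{-s_\ell})$. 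After $1\le i<m$ steps, the larger block is still $\Theta(G_\ell)$. The smaller block becomes $\Theta(G_\ell e^{-s_\ell-2is_\ell})$, since it was small at the start and its multiplier is $\Theta(1)$. I must check which block gets which multiplier. By the formula above, the block with the $O(1)$ multiplier is the $P$-block when $r_\ell$ is small, that is, exactly the small block. So the large block is the one being hit by $e^{-2s_\ell}$ each step. Hence after $i\ge1$ steps the norm is $\Theta(G_\ell)\max(e^{-2is_\ell},e^{-s_\ell})=\Theta(G_\ell e^{-s_\ell})$. Therefore $-\log\norm{g_{m\ell+i}}=s(\nu^\ell-1)/(\nu-1)+\nu^\ell s+O(\ell)$, and dividing by $\nu^\ell$ gives $s\nu/(\nu-1)$, as claimed. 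Verifying this bookkeeping against the recurrence is the main obstacle, and I would check it explicitly for $i=m$. At $i=m$ the large block becomes $e^{-2ms_\ell}$ and the small block stays $e^{-s_\ell}$, giving the new ratio $e^{(2m-1)s_\ell}$, which agrees with \eqref{eq:ratio-recurrence}.

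For \eqref{eq:sharp-exponent}, write $k=m\ell+i$. The previous step gives $\log(-\log\norm{e_k})=\ell\log\nu+O(1)$, since both constants $s/(\nu-1)$ and $\nu s/(\nu-1)$ are positive. Dividing by $k$ and using $\ell/k\to1/m$ gives $\log(2m-1)/m$. For the failure of $Q$-superlinearity, I would take the index $k=m\ell+1\to m\ell+2$ with $m\ge2$. The sharp estimate above shows that $\norm{g_{m\ell+1}}$ and $\norm{g_{m\ell+2}}$ are both $\Theta(G_\ell e^{-s_\ell})$ when $m\ge3$. More robustly, for all $m\ge2$, the dominant surviving block (the small one) is multiplied by a factor bounded below by a positive constant, namely $(b-a)/(a+b)$ or better. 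Thus $\norm{g_{k+1}}/\norm{g_k}\ge c>0$ along these indices, and after comparing with $e_k$ via $a,b$, the ratio $\norm{e_{k+1}}/\norm{e_k}$ does not tend to $0$.
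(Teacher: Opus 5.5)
Your argument is correct and is essentially the paper's proof. The larger block is contracted by a factor of order $e^{-2s_\ell}$ per update and the smaller one by a factor bounded above and below, so from the first update of each cycle the originally smaller block dominates and contributes the extra factor $e^{-s_\ell}$ on top of the boundary decay $-\log G_\ell=s(\nu^\ell-1)/(\nu-1)+O(\ell)$. The paper records this as the exact identity \eqref{eq:radial-identity} with multiplier $t_\ell\in[(b-a)/(a+b),(b-a)/a]$. For the failure of $Q$-superlinearity it computes the exact limit $(b-a)/a$ of $\norm{e_{m\ell+2}}/\norm{e_{m\ell+1}}$ along one parity of $\ell$, whereas your two-sided $\Theta$-bounds give a uniform positive lower bound along every cycle, which works equally well.

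Do delete the two sentences asserting that after $i\ge1$ steps the larger block is still $\Theta(G_\ell)$ and the smaller becomes $\Theta(G_\ell e^{-s_\ell-2is_\ell})$. They assign the multipliers backwards and would give the wrong coefficient $s/(\nu-1)$ for $1\le i<m$. They also contradict the correct assignment you verify immediately afterwards.
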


\begin{proof}[Proof of Proposition~\ref{prop:sharp}]
Let $\delta=b-a$, $s_\ell=\nu^\ell s$, and
\[
 D_\ell:=\log\max\{\norm{Pg_{m\ell}},\norm{Qg_{m\ell}}\}.
\]
If $r_\ell>1$, the $a$-block is larger at the cycle boundary.  Write its
norm as $U$ and the other as $Ue^{-s_\ell}$.  The moduli of the one-step
multipliers on the smaller and larger blocks are, respectively,
\[
 t_\ell=\frac{\delta}{a+b e^{-2s_\ell}},
 \qquad t_\ell e^{-2s_\ell}.
\]
After $i\ge1$ updates the ratio of the originally larger to the originally
smaller block is $\exp((1-2i)s_\ell)<1$.  The other block therefore
determines the maximum.  If $r_\ell<1$, the same argument applies with
$a$ and $b$ exchanged.  In both cases, with
\[
 t_\ell=
 \begin{cases}
 \delta/(a+b e^{-2s_\ell}),&r_\ell>1,\\
 \delta/(b+a e^{-2s_\ell}),&r_\ell<1,
 \end{cases}
\]
we obtain the exact identity
\begin{equation}\label{eq:radial-identity}
 \log\max\{\norm{Pg_{m\ell+i}},\norm{Qg_{m\ell+i}}\}
 =D_\ell-s_\ell+i\log t_\ell,
 \qquad 1\le i\le m.
\end{equation}
Moreover,
\[
 \frac{\delta}{a+b}\le t_\ell\le\frac{\delta}{a},
\]
so $\log t_\ell$ is uniformly bounded.  Taking $i=m$ in
\eqref{eq:radial-identity} and summing gives
\begin{equation}\label{eq:radial-sum}
 D_\ell
 =D_0-s\frac{\nu^\ell-1}{\nu-1}
   +m\sum_{j=0}^{\ell-1}\log t_j
 =-\frac{s}{\nu-1}\nu^\ell+O(\ell).
\end{equation}
The logarithms of the full gradient norm and the maximum block norm
differ by at most $\log\sqrt2$.  Since
$a\norm{e_k}\le\norm{g_k}\le b\norm{e_k}$, replacing either gradient
quantity by the error norm also changes its logarithm by a bounded amount.
Equations~\eqref{eq:radial-identity}--\eqref{eq:radial-sum} prove
\eqref{eq:sharp-phase}.  There are finitely many phases, each with a
positive limiting coefficient, so taking one more logarithm yields
\eqref{eq:sharp-exponent}.

Finally, \eqref{eq:ratio-recurrence} implies $r_\ell\to\infty$ on one
parity of the cycle index and $r_\ell\to0$ on the other.  Along the former
cycles, after the first update the $b$-block dominates both the gradient
and the error, while $\mu_\ell\to a$.  The next update thus satisfies
\[
 \frac{\norm{e_{m\ell+2}}}{\norm{e_{m\ell+1}}}
 \longrightarrow \frac{b-a}{a}>0.
\]
The second update exists because $m\ge2$, including when it is the next
cycle boundary.  Hence the successive error ratio cannot tend to zero,
which proves the last assertion.  The proof is complete.
\end{proof}

\begin{remark}
\label{rem:gu}
For two distinct eigenvalues, Gu's theorem~\cite[Theorem~2.1]{gu2026}
bounds the cycle-boundary gradients by $\exp(-Ce^{\kappa\ell})$ for every
$\kappa<\log(2m-1)$, and the full sequence with exponent $\kappa/m$, outside
a null set.  Equation~\eqref{eq:sharp-exponent} shows that the limiting
per-update exponent of every successful two-spectrum orbit equals
$\log(2m-1)/m$ exactly, so the critical value in that theorem is attained
as a limit and cannot be improved in this class.
\end{remark}

\begin{remark}
Measured per update, the exponent in \eqref{eq:sharp-exponent} is largest
among fixed integer cycle lengths at $m=2$.  Indeed,
$t\mapsto\log(2t-1)/t$ is strictly decreasing for $t\ge3$, and
$\log3/2>\log5/3$ since $3^3>5^2$.  This comparison counts gradient
updates only; it does not compare line-search costs or finite-precision
implementations.
\end{remark}

\section{Conclusions}

For two-spectrum Hessians, cyclic steepest descent has an explicit and
complete initial-condition classification.  Apart from one-step terminating
starts, the equal-norm spectral cone is exactly the set of linearly convergent
exceptions; every other orbit converges doubly exponentially.  The scalar
reduction identifies this null set without discarding any part of the actual
iteration.  The phase-dependent asymptotics also explain why rapid
$R$-superlinear convergence can coexist with nonvanishing ratios between
successive errors.  These conclusions distinguish the failure of a universal
convergence assertion from the almost-everywhere behavior of the same method.

\begin{acknowledgements}

We acknowledge Gewu Intelligence Lab for providing the collaborative research
environment in which this project was developed.
\end{acknowledgements}

\section*{Declarations}

\noindent\textbf{Funding.} No funding was received for this work.

\noindent\textbf{Competing interests.} The authors declare no competing interests.

\noindent\textbf{Data availability.} No datasets were generated or analysed.  The results are established
analytically in the article.

\section*{Disclosure of automated assistance}

The main results of this paper were obtained through a generative-AI
workflow using OpenAI GPT-5.6 Sol, Anthropic Claude Fable 5, OpenAI GPT-6
Astra, OpenAI GPT-6.1 Sol, Anthropic Claude Fable 5.1, and Anthropic Claude
Opus 5.5.
The workflow contributed to the two-spectrum reduction and phase-dependent
asymptotic analysis, algebraic and proof checks, literature review, and
preparation and revision of the LaTeX manuscript.

\end{document}